\definecolor{darkblue}{RGB}{0,102,153}
\theoremstyle{plain}
\newtheorem{thm}{Theorem}[section]
\newtheorem{lem}[thm]{Lemma}
\newtheorem{prop}[thm]{Proposition}
\theoremstyle{definition}
\newtheorem{defn}[thm]{Definition}
\theoremstyle{remark}
\newcommand{\coloneqq}{:=}
\DeclareMathOperator*{\minimize}{minimize}
\DeclareMathOperator{\stt}{subject~to}
\newcommand{\func}[3]{#1 \colon #2 \to #3}
\newcommand{\norm}[1]{\lVert #1 \rVert}
\newcommand{\innprod}[2]{\langle #1, #2 \rangle}
\newcommand{\N}{\mathbb{N}}
\newcommand{\R}{\mathbb{R}}
\newcommand{\Rinf}{\overline{\R}}
\newcommand{\XX}{\R^n}
\newcommand{\YY}{\R^m}
\DeclareMathOperator{\dom}{dom}
\newcommand{\email}[1]{\textsc{email} \href{mailto:#1}{#1}}
\newcommand{\orcid}[1]{\textsc{orcid} \href{https://orcid.org/#1}{#1}}
\newcommand{\amsmsc}[1]{\href{http://www.ams.org/mathscinet/msc/msc2020.html?t=#1}{#1}}
\newcommand{\KeywordsAnd}{\and}
\newcommand{\keywords}[1]{\par\noindent{\def\and{\unskip,\ }{\bf\small Keywords. }\small#1}\par}
\newcommand{\subclass}[1]{\par\noindent{\def\and{\unskip,\ }{\bf\small AMS MSC. }\small#1}\par}
\crefname{line}{Step}{Steps}
\newcommand{\regip}{{}\textsc{RegIP}{}}
\newcommand{\percival}{{}\textsc{Percival}{}}
\newcommand{\ipopt}{{}\textsc{Ipopt}{}}
\newcommand{\ipoptOption}[1]{\texttt{#1}}
\newcommand{\TheAuthor}{Alberto De~Marchi}
\newcommand{\TheEmail}{alberto.demarchi@unibw.de}
\newcommand{\TheOrcid}{0000-0002-3545-6898}
\newcommand{\TheAffiliation}{%
	Universit{\"a}t der Bundeswehr M{\"u}nchen,
	Department of Aerospace Engineering,
	Institute of Applied Mathematics and Scientific Computing,
	Neubiberg/Munich, Germany%
}
\newcommand{\TheAcknowledgements}{%
	I gratefully acknowledge the support of Ryan Loxton and the Centre for Optimisation and Decision Science for giving me the opportunity to visit Curtin University.
	I would also like to thank Hoa T. Bui for her friendly hospitality and lively discussions during this time in Perth.
}
\newcommand{\ZenodoCodeDoi}{10.5281/zenodo.7109904}
\newcommand{\TheTitle}{Regularized Interior Point Methods for Constrained Optimization and Control}
\newcommand{\TheKeywords}{%
	Nonlinear programming\KeywordsAnd
	optimization-based control\KeywordsAnd
	interior point methods\KeywordsAnd
	augmented Lagrangian%
}
\newcommand{\TheAMSsubj}{%
	\amsmsc{65K05}\and% Numerical mathematical programming methods
	\amsmsc{90C06}\and% Large-scale problems in mathematical programming
	\amsmsc{90C26}\and% Nonconvex programming, global optimization
	\amsmsc{90C30}% Nonlinear programming
}
\begin{document}
	
\title{\bfseries \TheTitle}
\author{\TheAuthor\thanks{\TheAffiliation. \email{\TheEmail}, \orcid{\TheOrcid}.}}
\date{}

\maketitle

\begin{abstract}
	Regularization and interior point approaches offer valuable perspectives to address constrained nonlinear optimization problems in view of control applications.
This paper discusses the interactions between these techniques and proposes an algorithm that synergistically combines them.
Building a sequence of closely related subproblems and approximately solving each of them, this approach inherently exploits warm-starting, early termination, and the possibility to adopt subsolvers tailored to specific problem structures.
Moreover, by relaxing the equality constraints with a proximal penalty, the regularized subproblems are feasible and satisfy a strong constraint qualification by construction, allowing the safe use of efficient solvers.
We show how regularization benefits the underlying linear algebra and a detailed convergence analysis indicates that limit points tend to minimize constraint violation and satisfy suitable optimality conditions.
Finally, we report on numerical results in terms of robustness, indicating that the combined approach compares favorably against both interior point and augmented Lagrangian codes.
\end{abstract}
\keywords{\TheKeywords}
\subclass{\TheAMSsubj}

\section{Introduction}
Mathematical optimization plays an important role in model-based and data-driven control systems, forming the basis for advanced techniques such as optimal control, nonlinear model predictive control (MPC) and parameters estimation.
Significant research effort on computationally efficient real-time optimization algorithms contributed to the success of MPC over the years and yet the demand for fast and reliable methods for a broad spectrum of applications is growing; see \cite{sopasakis2020open,saraf2022efficient} and references therein.
In order to tackle these challenges, it is desirable to have an algorithm that benefits from warm-starting information, can cope with infeasibility, is robust to problem scaling, and exploits the structure of the problem.
In order to reduce computations and increase robustness, a common approach is to relax the requirements on the solutions, in terms of optimality, constraint violation, or both \cite{diehl2009efficient,saraf2022efficient}.
In this work, we propose to address such features by combining proximal regularization and interior point techniques, for developing a stabilized, efficient and robust numerical method.
We advocate for this strategy by bringing together and combining a variety of ideas from the nonlinear programming literature.

Let us consider the constrained nonconvex problem
\begin{align}\label{eq:P}\tag{P}
	\minimize_x\quad&f(x) \\
	\stt\quad&c(x) = 0 ,\qquad
	x \geq 0 ,\nonumber
\end{align}
where functions $\func{f}{\XX}{\R}$ and $\func{c}{\XX}{\YY}$ are (at least) continuously differentiable.
Problems with general inequality constraints on $x$ and $c(x)$ can be reformulated as \eqref{eq:P} by introducing auxiliary variables.
Nonlinear programming (NLP) problems such as \eqref{eq:P} have been extensively studied and there exist several approaches for their numerical solution.
Interior point (IP) \cite{vanderbei1999interior,waechter2006implementation},
penalty and augmented Lagrangian (AL) \cite{conn1991globally,andreani2008augmented,birgin2014practical}
and sequential programming \cite{fiacco1968nonlinear}
schemes are predominant ideas and have been re-combined in many ways \cite{curtis2012penalty,birgin2016sequential,armand2019rapid}.

Starting from linear programming, IP methods had a significant impact on the field of mathematical optimization \cite{gondzio2012interior}.
By solving a sequence of barrier subproblems, they can efficiently handle inequality constraints and scale well with the problem size.
The state-of-the-art solver \ipopt{}, described by \cite{waechter2006implementation}, is an emblem of this remarkable success.
However, relying on Newton-type schemes for approximately solving the subproblems, IP algorithms may suffer degeneracy and lack of constraint qualifications if suitable counter-mechanisms are not implemented.
On the contrary, proximal techniques naturally cope with these scenarios thanks to their inherent regularizing action.
Widely investigated in the convex setting \cite{rockafellar1976monotone}, their favorable properties have been exploited to design (primal-dual) stabilized methods building on the proximal point algorithm \cite{friedlander2012primal,liaomcpherson2020fbstab,demarchi2022qpdo}.
The analysis of their close connection with the AL framework \cite{rockafellar1974augmented} led to the development of techniques applicable to more general problems \cite{ma2018stabilized,demarchi2021augmented,potschka2021sequential}.

The combination of IP and proximal strategies has been successfully leveraged in the context of convex quadratic programming \cite{altman1999regularized,cipolla2022proximal} and for linear \cite{dehghani2020regularized} and nonlinear \cite{siqueira2018regularized} least-squares problems.
With this work we address general NLPs and devise a method for their numerical solution, which can be seen as an extension of a regularized Lagrange--Newton method to handle bound constraints via a barrier function \cite{demarchi2021augmented}, or as a proximally stabilized IP algorithm, generalizing the ideas put forward by \cite{cipolla2022proximal}.%
\footnote{Although beyond the scope of this paper, topics such as infeasibility detection \cite{armand2019rapid}, ill-posed subproblems \cite{andreani2008augmented}, and feasibility restoration \cite[\S 3.3]{waechter2006implementation} are of great relevance in this context.
Appropriate mechanisms should be incorporated in practical implementations, as they can significantly improve the performances.}

\paragraph*{Outline}
The paper is organized as follows.
In \cref{sec:stationarity_concepts} we provide and comment on some relevant optimality notions.
The methodology is discussed in \cref{sec:methods} detailing the proposed algorithm, whose convergence properties are investigated in \cref{sec:convergence}.
We report numerical results on benchmark problems in \cref{sec:numerics} and conclude the paper in \cref{sec:conclusion}.

\paragraph*{Notation}
With $\N$, $\R$, and $\Rinf \coloneqq \R \cup \{\infty\}$ we denote the natural, real and extended real numbers, respectively.
We denote the set of real vectors of dimension $n\in\N$ as $\R^n$; a real matrix with $m\in\N$ rows and $n\in\N$ columns as $A \in \R^{m\times n}$ and its transpose as $A^\top \in \R^{n\times m}$.
For a vector $a \in \R^n$, its $i$-th element is $a_i$ and its squared Euclidean norm is $\|a\|^2 = a^\top a$.
A vector or matrix with all zero elements is represented by $0$.
The gradient of a function $\func{f}{\XX}{\R}$ at a point $\bar{x} \in \R^n$ is denoted by $\nabla f(\bar{x}) \in \XX$; the Jacobian of a vector function $\func{c}{\XX}{\YY}$ by $\nabla c(\bar{x}) \in \R^{m\times n}$.

\section{Optimality and Stationarity}\label{sec:stationarity_concepts}
In this section we introduce some optimality concepts, following \cite[Ch. 3]{birgin2014practical} and \cite[\S 2]{demarchi2022interior}.

\begin{defn}[Feasibility]
	Relative to \eqref{eq:P}, we say a point $x^\ast\in\XX$ is \emph{feasible} if $x^\ast \geq 0$ and $c(x^\ast) = 0$; it is \emph{strictly feasible} if additionally $x^\ast > 0$.
\end{defn}
\begin{defn}[Approximate KKT stationarity]\label{defn:epsKKTstationary}
	Relative to \eqref{eq:P}, a point $x^\ast\in\XX$ is \emph{$\varepsilon$-KKT stationary} for some $\varepsilon \geq 0$ if there exist multipliers $y^\ast\in\YY$ and $z^\ast\in\XX$ such that
	\begin{subequations}\label{eq:epsKKTstationary}
		\begin{align}
			\norm{ \nabla f(x^\ast) + \nabla c(x^\ast)^\top y^\ast + z^\ast } {}\leq{}& \varepsilon , \label{eq:epsKKTstationary:x}\\
			\norm{ c(x^\ast) } {}\leq{}& \varepsilon , \\
			x^\ast \geq - \varepsilon ,\quad
			z^\ast \leq \varepsilon ,\quad
			\min\{x^\ast,-z^\ast\} {}\leq{}& \varepsilon . \label{eq:epsKKTstationary:z}
		\end{align}
	\end{subequations}
	When $\varepsilon = 0$, the point $x^\ast$ is said \emph{KKT stationary}.
\end{defn}
Notice that \eqref{eq:epsKKTstationary:z} provides a condition modeling the approximate satisfaction of the (elementwise) complementarity condition $\min\{x,-z\} = 0$ within some tolerance $\varepsilon \geq 0$.
The IP algorithm discussed in \cref{sec:methods} satisfies a stronger version of these conditions, since the iterates it generates meet the constraints $x \geq 0$ and $z \leq 0$ by construction.
Furthermore, we point out that the condition $\min\{x_i,-z_i\}\leq \varepsilon$ is analogous to $- x_i z_i \leq \varepsilon$, more typical for interior point methods, but does not depend on a specific barrier function, e.g., the logarithmic barrier in \cite[\S 2.1]{waechter2006implementation}.

We shall consider the limiting behavior of approximate KKT stationary points when the tolerance $\varepsilon$ vanishes.
In fact, having $x^k \to x^\ast$ with $x^k$ $\varepsilon_k$-KKT stationary for \eqref{eq:P} and $\varepsilon_k \searrow 0$ does not guarantee KKT stationarity of a limit point $x^\ast$ of $\{x^k\}$.
This issue raises the need for defining KKT stationarity in an asymptotic sense \cite[Def. 3.1]{birgin2014practical}.
\begin{defn}[Asymptotic KKT stationarity]\label{defn:asympKKTstationary}
	Relative to \eqref{eq:P}, a feasible point $x^\ast\in\XX$ is \emph{AKKT stationary} if there exist sequences $\{x^k\}, \{z^k\}\subset\XX$, and $\{y^k\}\subset\YY$ such that $x^k \to x^\ast$ and
	\begin{subequations}\label{eq:asympKKTstationary}
		\begin{align}
			\nabla f(x^k) + \nabla c(x^k)^\top y^k + z^k {}\to{}& 0 , \label{eq:asympKKTstationary:x} \\
			\min\{ x^k, -z^k \} {}\to{}& 0 . \label{eq:asympKKTstationary:z}
		\end{align}
	\end{subequations}
\end{defn}
Any local minimizer $x^\ast$ for \eqref{eq:P} is AKKT stationary, independently of constraint qualifications \cite[Thm 3.1]{birgin2014practical}.

\section{Approach and Algorithm}\label{sec:methods}
The methodology presented in this section builds upon the AL framework, interpreted as a proximal point scheme in the nonconvex regime, and IP methods.
The basic idea is to construct a sequence of proximally regularized subproblems and to approximately solve each of them as a single barrier subproblem, effectively merging the AL and IP outer loops.
Reduced computational cost can be achieved with an effective warm-starting of the IP iterations and with the tight entanglement of barrier and proximal penalty strategies, by monitoring and updating the parameters' values alongside with the inner tolerance.

A classical approach is to consider a sequence of bound-constrained Lagrangian (BCL) subproblems \cite{conn1991globally,birgin2014practical}
\begin{equation}\label{eq:Pbcl}
	\minimize_{x\geq 0}\quad f(x) + \frac{1}{2 \rho_k} \| c(x) + \rho_k \hat{y}^k \|^2
\end{equation}
where $\rho_k > 0$ and $\hat{y}^k \in \YY$ are some given penalty parameter and dual estimate, respectively.
The nonlinearly-constrained Lagrangian (NCL) scheme \cite{ma2018stabilized} considers equality-constrained subproblems by introducing an auxiliary variable $s \in \YY$ and the constraint $c(x) = s$.
Analogously, a proximal point perspective yields the equivalent reformulation
\begin{align}\label{eq:Pproxy}
	\minimize_{x ,\, \lambda}\quad&f(x) + \frac{\rho_k}{2} \| \lambda \|^2 \\
	\stt\quad&c(x) + \rho_k (\hat{y}^k - \lambda) = 0 ,\qquad
	x \geq 0 , \nonumber
\end{align}
recovering the dual regularization term obtained, e.g., by \cite{potschka2021sequential,demarchi2021augmented,demarchi2022qpdo}.
By construction, these regularized subproblems are always feasible and satisfy a strong constraint qualification, namely the LICQ, at all points.

The regularized subproblems \eqref{eq:Pbcl}--\eqref{eq:Pproxy} can be numerically solved via IP algorithms.
Let us consider a barrier parameter $\mu_k > 0$ and barrier functions $\func{b_i}{\R}{\Rinf}$, $i=1,\ldots,n$, each with domain $\dom b_i = (0,\infty)$, and such that $b_i(t) \to \infty$ as $t\to 0^+$ and $b_i^\prime \leq 0$.
Exemplarily, the logarithmic function $x \mapsto - \ln(x)$ is one of such barrier functions.
Other choices can be considered as well, e.g., to handle bilateral constraints \cite{bertolazzi2007real}.
We collect these barrier functions to define $\func{b}{\XX}{\Rinf}$, $b \colon x \mapsto \sum_{i=1}^n b_i(x_i)$, whose domain is $\dom b = (0,\infty)^n$.
Thus, analogously to \cite{armand2019rapid}, a barrier counterpart for the BCL subproblem \eqref{eq:Pbcl} reads
\begin{equation}\label{eq:Pbarrier_BCL}
	\minimize_x \quad f(x) + \frac{1}{2 \rho_k} \| c(x) + \rho_k \hat{y}^k \|^2 + \mu_k b(x) ,
\end{equation}
whereas for the constrained subproblem \eqref{eq:Pproxy} this leads to
\begin{align}\label{eq:PproxyBarrier}
	\minimize_{x ,\, \lambda}\quad&f(x) + \frac{\rho_k}{2} \| \lambda \|^2 + \mu_k b(x) \\
	\stt\quad&c(x) + \rho_k (\hat{y}^k - \lambda) = 0 , \nonumber
\end{align}
which is a regularized version of \cite[Eq. 3]{waechter2006implementation} and reminiscent of \cite[Eq. 2]{birgin2016sequential}.
It should be stressed that, in stark contrast with classical AL and IP schemes, we intend to find an (approximate) solution to the regularized subproblem \eqref{eq:Pproxy} by (approximately) solving only one barrier subproblem \eqref{eq:PproxyBarrier}.
Inspired by \cite{curtis2012penalty,cipolla2022proximal}, our rationale is to drive $\rho_k, \mu_k$ and the inner tolerance $\epsilon_k$ concurrently toward zero, effectively knitting together proximal and barrier strategies.

It should be noted that a primal (Tikhonov-like) regularization term is not explicitly included in \eqref{eq:Pbcl}--\eqref{eq:PproxyBarrier}.
In fact, the original objective $f$ could be replaced by a (proximal) model of the form $x \mapsto f(x) + \frac{\sigma_k}{2} \| x - \hat{x}^k \|^2$, with some given primal regularization parameter $\sigma_k \geq 0$ and reference point $\hat{x}^k \in \XX$.
However, as this term can be interpreted as an inertia correction, we prefer the subsolver to account for its contribution; cf. \cite[\S 3.1]{waechter2006implementation}.
In this way, the subsolver can surgically tune the primal correction term as needed, possibly improving the convergence speed, and surpassing the issue that suitable values for $\sigma_k$ are unknown a priori.

\begin{algorithm2e}
	\DontPrintSemicolon
	\caption{Regularized interior point method for general nonlinear programs \eqref{eq:P}}%
	\label{alg:RIPM}%
	\KwData{$\epsilon_0, \rho_0, \mu_0 > 0$, $\kappa_\rho, \kappa_\mu, \kappa_\epsilon \in (0,1)$, $\theta_\rho, \theta_\mu \in [0,1)$, $Y \subset \YY$ nonempty bounded, $\varepsilon > 0$}
	\KwResult{$\varepsilon$-KKT stationary point $x^\ast$ with $y^\ast$, $z^\ast$}
	
	\For{$k = 0,1,2,\ldots$}{
		\label{step:yhat}Select $\hat{y}^k \in Y$\;
		\label{step:xy}Find an $\epsilon_k$-KKT stationary point $(x^k,\lambda^k)$ for \eqref{eq:PproxyBarrier}, with multiplier $y^k$\;
		\label{step:z}Set $z^k \gets \mu_k \nabla b(x^k)$\;
		\If{$(x^k, y^k, z^k)$ $\mathrm{satisfies}$ \eqref{eq:epsKKTstationary}}{
			\Return $(x^\ast,y^\ast,z^\ast) \gets (x^k,y^k,z^k)$\;
		}
		Set $C^k \gets \norm{ c(x^k) }$ and $V^k \gets \norm{ \min\{x^k,-z^k\} }$\;
		\If{$k=0$ \KwOr $C^k \leq \max\{ \varepsilon, \theta_\rho C^{k-1}\}$ \label{step:updatePenalty:if}}{
			set $\rho_{k+1} \gets \rho_k$, \textbf{else} select $\rho_{k+1} \in (0,\kappa_\rho \rho_k]$\label{step:updatePenalty:failed}\;
		}
		\If{$k=0$ \KwOr $V^k \leq \max\{ \varepsilon, \theta_\mu V^{k-1}\}$ \label{step:updateBarrier:if}}{
			set $\mu_{k+1} \gets \mu_k$, \textbf{else} select $\mu_{k+1} \in (0,\kappa_\mu \mu_k]$\label{step:updateBarrier:failed}\;
		}
		Set $\epsilon_{k+1} \gets \max\{ \varepsilon, \kappa_\epsilon \epsilon_k \}$\;
	}
\end{algorithm2e}

The overall procedure is detailed in \cref{alg:RIPM}.
At every outer iteration, indexed by $k$, \cref{step:xy} requires to compute an approximate stationary point, with the associated Lagrange multiplier, for the regularized barrier subproblem \eqref{eq:PproxyBarrier}.
As the dual estimate $\hat{y}^k$ is selected from some bounded set $Y\subset\YY$ at \cref{step:yhat}, the AL scheme is \emph{safeguarded} and has stronger global convergence properties \cite[Ch. 4]{birgin2014practical}.
The assignment of $z^k$ at \cref{step:z} follows from comparing and matching the stationarity conditions for \eqref{eq:P} and \eqref{eq:PproxyBarrier}.
After checking termination, we monitor progress in constraint violation and complementarity, based on \eqref{eq:epsKKTstationary}, and update parameters $\rho_k$ and $\mu_k$ accordingly, as well as the inner tolerance $\varepsilon_k$.
At \cref{step:updatePenalty:if,step:updateBarrier:if} we consider relaxed conditions for \emph{satisfactory} feasibility and complementarity as it is preferable to have the sequences $\{\rho_k\}$, $\{\mu_k\}$, and $\{\epsilon_k\}$ bounded away from zero, in order to avoid unnecessary ill-conditioning and tight tolerances.
Sufficient conditions to guarantee boundedness of the penalty parameter $\{\rho_k\}$ away from zero are given, e.g., by \cite[\S 5]{andreani2008augmented}.
Remarkably, as established by \cref{lem:asympComplementarity} in \cref{sec:convergence}, there is no need for the barrier parameter $\mu_k$ to vanish in order to achieve $\varepsilon$-complementarity in the sense of \eqref{eq:epsKKTstationary:z}, for $\varepsilon > 0$.

We shall mention that considering equivalent yet different subproblem formulations may affect the practical performance of the subsolver.
It is enlightening to pinpoint the effect of the dual regularization in \eqref{eq:PproxyBarrier} and to appreciate its interactions with the linear algebra routines used to solve the linear systems arising in Newton-type methods.
Although \eqref{eq:PproxyBarrier} has more (possibly many more) variables than \eqref{eq:Pbarrier_BCL}, a simple reordering yields matrices with the same structure \cite{demarchi2021augmented,potschka2021sequential}.
Let us have a closer look.
Defining the Lagrangian function $\mathcal{L}_k(x,y) \coloneqq f(x) + \mu_k b(x) + \innprod{y}{c(x)}$, the stationarity condition for \eqref{eq:Pbarrier_BCL} reads $0 = \nabla_x \mathcal{L}_k( x, y_k(x) )$, where $y_k(x) \coloneqq \hat{y}^k + \rho_k^{-1} c(x)$, and the corresponding Newton system is
\begin{equation}
	\begin{bmatrix}
		H_k( x, y_k(x) ) + \frac{1}{\rho_k} \nabla c(x)^\top \nabla c(x)
	\end{bmatrix} \delta x = - \nabla_x \mathcal{L}_k(x,y_k(x)) ,
\end{equation}
where $H_k(x,y) \in \R^{n \times n}$ denotes the Hessian matrix $\nabla_{xx}^2 \mathcal{L}_k(x,y)$ or a symmetric approximation thereof.
A linear transformation yields the equivalent linear system
\begin{equation}
	\begin{bmatrix}
		H_k( x, y_k(x) ) & \nabla c(x)^\top \\
		\nabla c(x) & - \rho_k I
	\end{bmatrix} \begin{bmatrix}
		\delta x \\ \delta y
	\end{bmatrix} = - \begin{bmatrix}
		\nabla_x \mathcal{L}_k(x,y_k(x)) \\
		0
	\end{bmatrix} .
\end{equation}
Analogous Newton systems for \eqref{eq:PproxyBarrier} read
\begin{equation}
\begin{bmatrix}
		H_k(x,y) & \cdot & \nabla c(x)^\top \\
		\cdot &\rho_k I & - \rho_k I \\
		\nabla c(x) & - \rho_k I & \cdot
	\end{bmatrix} \begin{bmatrix}
		\delta x \\ \delta \lambda \\ \delta y
	\end{bmatrix} = - \begin{bmatrix}
		\nabla_x \mathcal{L}_k(x,y) \\
		\rho_k (\lambda - y) \\
		c(x) + \rho_k (\hat{y}^k - \lambda)
	\end{bmatrix}
\end{equation}
and formally solving for $\delta\lambda$ gives the condensed system
\begin{equation}
	\label{eq:linsys}
	\begin{bmatrix}
		H_k(x,y) & \nabla c(x)^\top \\
		\nabla c(x) & - \rho_k I
	\end{bmatrix} \begin{bmatrix}
		\delta x \\ \delta y
	\end{bmatrix} = - \begin{bmatrix}
		\nabla_x \mathcal{L}_k(x,y) \\
		c(x) + \rho_k (\hat{y}^k - y)
	\end{bmatrix} .
\end{equation}
The resemblances between these linear systems are apparent, as well as the differences.
The AL relaxation in \eqref{eq:Pbarrier_BCL} introduces a dual regularization for both the linear algebra and nonlinear solver, whose \emph{hidden} constraint $c(x) + \rho_k (\hat{y}^k - y) = 0$ holds pointwise due to the identity $y = y_k(x)$.
We remark that, entering the (2,2)-block, the dual regularization prevents issues due to linear dependence.
Furthermore, the primal regularization is left to the inertia correction strategy of the subsolver, affecting the (1,1)-block as in \cite[\S 3.1]{waechter2006implementation}.
If the approximation $H_k(x,y)$ is positive definite, e.g., by adopting suitable quasi-Newton techniques, the matrix in \eqref{eq:linsys} is symmetric quasi-definite and can be efficiently factorized with tailored linear algebra routines \cite{vanderbei1995symmetric}.

\section{Convergence Analysis}\label{sec:convergence}
In this section we analyze the asymptotic properties of the iterates generated by \cref{alg:RIPM} under the following blanket assumptions:
\begin{enumerate}[label=(\textsc{a}\arabic*)]
	\item Functions $\func{f}{\XX}{\R}$ and $\func{c}{\XX}{\YY}$ in \eqref{eq:P} are continuously differentiable.
	\item\label{ass:wellposed} Subproblems \eqref{eq:PproxyBarrier} are well-posed for all parameters' values, namely for any $\mu_k \leq \mu_0$, $\rho_k \leq \rho_0$, and $\hat{y}^k \in Y$.
\end{enumerate}
First, we characterize the iterates in terms of stationarity.
\begin{lem}\label{lem:xyzSignKKT}
	Consider a sequence $\{x^k,y^k,z^k\}$ generated by \cref{alg:RIPM}.
	Then, for all $k\in\N$, it is $x^k > 0$, $z^k \leq 0$, and the following conditions hold:
	\begin{subequations}\label{eq:xyzSignKKT}
		\begin{align}
			\| \nabla f(x^k) + \nabla c(x^k)^\top y^k + z^k \|
			{}\leq{}&
			\epsilon_k , \label{eq:xyzSignKKT:x}\\
			\| c(x^k) + \rho_k (\hat{y}^k - y^k) \|
			{}\leq{}&
			2 \epsilon_k \label{eq:xyzSignKKT:y}.
		\end{align}
	\end{subequations}
\end{lem}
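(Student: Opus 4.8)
The plan is to unfold the notion of $\epsilon_k$-KKT stationarity for the equality-constrained barrier subproblem \eqref{eq:PproxyBarrier} at the point $(x^k,\lambda^k)$ with multiplier $y^k$ produced in \cref{step:xy}, and to match it term by term against \eqref{eq:xyzSignKKT} using the assignment $z^k = \mu_k \nabla b(x^k)$ of \cref{step:z}. For the sign conditions, I would argue that the barrier $b$ has domain $\dom b = (0,\infty)^n$, so any point at which the objective of \eqref{eq:PproxyBarrier} is finite -- in particular the computed iterate $x^k$ -- satisfies $x^k > 0$; and since $b = \sum_{i} b_i$ with each $b_i^\prime \leq 0$, the gradient $\nabla b(x^k)$ is componentwise nonpositive, so $z^k = \mu_k \nabla b(x^k) \leq 0$ because $\mu_k > 0$.

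For the stationarity estimates, write the Lagrangian of \eqref{eq:PproxyBarrier} as $(x,\lambda,y) \mapsto f(x) + \tfrac{\rho_k}{2}\norm{\lambda}^2 + \mu_k b(x) + \innprod{y}{c(x) + \rho_k(\hat{y}^k - \lambda)}$. Approximate stationarity in $x$ then reads $\norm{\nabla f(x^k) + \mu_k \nabla b(x^k) + \nabla c(x^k)^\top y^k} \leq \epsilon_k$, approximate stationarity in $\lambda$ reads $\rho_k \norm{\lambda^k - y^k} \leq \epsilon_k$ (as $\nabla_\lambda$ of the Lagrangian equals $\rho_k(\lambda - y)$), and approximate primal feasibility reads $\norm{c(x^k) + \rho_k(\hat{y}^k - \lambda^k)} \leq \epsilon_k$. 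Substituting $z^k = \mu_k \nabla b(x^k)$ into the first estimate gives \eqref{eq:xyzSignKKT:x} immediately. For \eqref{eq:xyzSignKKT:y}, I would add and subtract $\rho_k \lambda^k$ and invoke the triangle inequality,
\begin{equation*}
	\norm{c(x^k) + \rho_k(\hat{y}^k - y^k)} \leq \norm{c(x^k) + \rho_k(\hat{y}^k - \lambda^k)} + \rho_k \norm{\lambda^k - y^k} \leq \epsilon_k + \epsilon_k = 2\epsilon_k ,
\end{equation*}
which accounts for the factor $2$.

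This argument is essentially bookkeeping -- it records how the subproblem's stationarity system maps onto the target conditions -- so I do not anticipate a genuine obstacle. The only point requiring mild care is to pin down exactly which residuals the adopted $\epsilon_k$-KKT notion bounds by $\epsilon_k$ (the stacked gradient residual, or each block separately), but in either reading the three estimates above remain available, since every block inherits any bound on the stacked residual. The $\lambda$-block estimate $\rho_k \norm{\lambda^k - y^k} \leq \epsilon_k$ is the crucial one: it ties the subproblem multiplier $y^k$ to the regularization variable $\lambda^k$, and is precisely what makes the triangle-inequality step for \eqref{eq:xyzSignKKT:y} go through.
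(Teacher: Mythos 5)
Your proposal is correct and follows essentially the same route as the paper's proof: the sign conditions from the barrier's domain and $b_i'\leq 0$, the blockwise $\epsilon_k$-stationarity conditions of \eqref{eq:PproxyBarrier} (including the $\lambda$-block bound $\rho_k\norm{\lambda^k-y^k}\leq\epsilon_k$), and the triangle inequality to obtain the factor $2$ in \eqref{eq:xyzSignKKT:y}. The only difference is that you write out the subproblem Lagrangian explicitly, which the paper leaves implicit.
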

\begin{proof}
	Positivity of $x^k$ follows from the barrier function $b$ having domain $\dom b = (0,\infty)^n$, whereas nonpositivity of $z^k$ is a consequence of $b_i^\prime \leq 0$ for all $i$ and $\mu_k > 0$.
	Based on \cref{defn:epsKKTstationary} and \cref{step:z} of \cref{alg:RIPM}, the $\epsilon_k$-KKT stationarity of $(x^k,\lambda^k)$ for \eqref{eq:PproxyBarrier}, with multiplier $y^k$, yields \eqref{eq:xyzSignKKT:x} along with
	\begin{subequations}\label{eq:epsKKTstationary:PproxyBarrier}
		\begin{align}
			\rho_k \norm{ \lambda^k - y^k } {}\leq{}& \epsilon_k , \label{eq:epsKKTstationary:PproxyBarrier:lambda}\\
			\norm{ c(x^k) + \rho_k (\hat{y}^k - \lambda^k) } {}\leq{}& \epsilon_k . \label{eq:epsKKTstationary:PproxyBarrier:y}
		\end{align}
	\end{subequations}
	By the triangle inequality, \eqref{eq:epsKKTstationary:PproxyBarrier:lambda}--\eqref{eq:epsKKTstationary:PproxyBarrier:y} imply \eqref{eq:xyzSignKKT:y}.
	\qedhere
\end{proof}
Patterning \cite[Thm 4.2(ii)]{demarchi2022interior}, we establish asymptotic complementarity.
\begin{lem}\label{lem:asympComplementarity}
	Consider a sequence $\{ x^k,y^k,z^k \}$ of iterates generated by \cref{alg:RIPM} with $\varepsilon = 0$.
	Then, it holds $\lim\limits_{k\to\infty} \min\{x^k,-z^k\} = 0$.
\end{lem}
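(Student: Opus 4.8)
The plan is to monitor the complementarity residual $V^k = \norm{\min\{x^k,-z^k\}}$ that already appears in \cref{alg:RIPM} and to exploit how the barrier parameter $\mu_k$ is updated at \cref{step:updateBarrier:if,step:updateBarrier:failed}. By the sign relations $x^k > 0$ and $z^k \leq 0$ established in \cref{lem:xyzSignKKT} we have $\min\{x^k,-z^k\} \geq 0$ componentwise, hence $V^k \geq 0$, so it suffices to show $V^k \to 0$; note also that if \cref{alg:RIPM} stopped at some finite $k$ then \eqref{eq:epsKKTstationary:z} with $\varepsilon = 0$, combined with those sign relations, would already force $\min\{x^k,-z^k\} = 0$, so we may assume an infinite sequence of iterates. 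First I would split into two cases, according to whether the barrier-parameter reduction at \cref{step:updateBarrier:failed} is triggered infinitely often --- equivalently, since $\varepsilon = 0$, whether the test $V^k \leq \theta_\mu V^{k-1}$ fails for infinitely many $k$.

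If that test fails only finitely often, there is an index $K$ with $V^k \leq \theta_\mu V^{k-1}$ for every $k > K$; iterating this inequality and using $\theta_\mu \in [0,1)$ gives $V^k \leq \theta_\mu^{\,k-K}\, V^K \to 0$, which is the desired conclusion.

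If instead the reduction is triggered infinitely often, then $\mu_{k+1} \leq \kappa_\mu \mu_k$ at infinitely many indices while $\{\mu_k\}$ is nonincreasing, so $\kappa_\mu \in (0,1)$ forces $\mu_k \to 0$. I would then convert this into $V^k \to 0$ through $z^k = \mu_k \nabla b(x^k)$ from \cref{step:z}: componentwise $-z_i^k = -\mu_k b_i'(x_i^k) \geq 0$, and the elementary inequality $\min\{a,b\}^2 \leq ab$, valid for $a,b \geq 0$, yields
\[
	(V^k)^2 = \sum_{i=1}^n \min\{x_i^k,-z_i^k\}^2 \leq \sum_{i=1}^n x_i^k\,(-z_i^k) = \mu_k \sum_{i=1}^n x_i^k\,\bigl(-b_i'(x_i^k)\bigr) .
\]
For the logarithmic barrier each term $x_i^k\,(-b_i'(x_i^k))$ equals $1$, so $(V^k)^2 \leq n\mu_k \to 0$; together with the first case this proves $\lim_{k\to\infty}\min\{x^k,-z^k\} = 0$.

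The step I expect to be the main obstacle is the last one --- passing from $\mu_k \to 0$ to $V^k \to 0$ for a barrier $b$ satisfying only the minimal hypotheses of \cref{sec:methods}, since then the products $x_i^k\,b_i'(x_i^k)$ need not be bounded a priori. A clean way around it works componentwise: fix $\eta > 0$; if $\min\{x_i^k,-z_i^k\} \geq \eta$ then $x_i^k \geq \eta$ and $-b_i'(x_i^k) \geq \eta/\mu_k \to \infty$, which is impossible as soon as $b_i'$ is bounded on $[\eta,\infty)$ --- e.g. because $b_i'$ is nondecreasing, as for convex barriers --- so, $\eta$ being arbitrary, $V^k \to 0$. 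Alternatively one may use $C^1$-regularity of the $b_i$ together with boundedness of the iterates $\{x^k\}$ (available under mild coercivity of the subproblems) to bound $t \mapsto t\,b_i'(t)$ on the relevant range and conclude via the displayed estimate.
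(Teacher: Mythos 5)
Your proof is correct, and it follows the same basic mechanism as the paper in its core case ($z_i^k=\mu_k b_i'(x_i^k)\to 0$ once $x_i^k$ stays away from zero), but the overall structure differs in a way worth noting. The paper's proof asserts that if the algorithm runs forever then ``consequently $\mu_k\searrow 0$'' and proceeds from there with a componentwise dichotomy ($x_i^k\to 0$ versus a subsequence bounded away from zero); it does not separately treat the possibility that the test $V^k\leq\theta_\mu V^{k-1}$ succeeds at every iteration, in which case $\mu_k$ never decreases. Your first case handles exactly that scenario, observing that the test succeeding eventually forces $V^k\leq\theta_\mu^{\,k-K}V^K\to 0$ by $\theta_\mu\in[0,1)$ --- so your argument is, on this point, more complete than the paper's. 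In your second case, the estimate $\min\{a,b\}^2\leq ab$ giving $(V^k)^2\leq n\mu_k$ is an elegant, quantitative shortcut, but as you yourself note it relies on $t\mapsto t\,b_i'(t)$ being bounded, which the logarithmic barrier satisfies identically but the paper's minimal hypotheses ($b_i'\leq 0$, $b_i(t)\to\infty$ as $t\to 0^+$) do not guarantee. Your componentwise fallback (if $\min\{x_i^k,-z_i^k\}\geq\eta$ then $-b_i'(x_i^k)\geq\eta/\mu_k\to\infty$, contradicting boundedness of $b_i'$ on $[\eta,\infty)$) is essentially the contrapositive of the paper's second subcase, and both arguments share the same residual assumption that $b_i'$ is bounded on sets bounded away from zero (automatic for convex barriers or bounded iterates); the paper states this boundedness without further justification, so you are not missing anything the paper supplies.
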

\begin{proof}
	The algorithm can terminate in finite time only if the returned triplet $(x^\ast,y^\ast,z^\ast)$ satisfies $\min\{ x^\ast,-z^\ast \} = 0$.
	Excluding this ideal situation, we may assume that it runs indefinitely and that consequently $\mu_k \searrow 0$, owing to \cref{step:updateBarrier:if,step:updateBarrier:failed} and recalling that $x^k > 0$ and $z^k \leq 0$ for all $k\in\N$ by \cref{lem:xyzSignKKT}.
	Consider now an arbitrary index $i\in\{1,\ldots,n\}$ and the two possible cases.
	If $x_i^k \to 0$, then the statement readily follows from $z_i^k \leq 0$.
	If instead a subsequence $\{x_i^k\}_K$ remains bounded away from zero, then $\{ b_i^\prime(x_i^k) \}_K$ is bounded and therefore $z_i^k = \mu_k b_i^\prime(x_i^k) \to 0$ as $k\to_K \infty$, proving the statement since $x_i^k>0$.
	The claim then follows from the arbitrarity of the index $i$ and the subsequence.
	\qedhere
\end{proof}
Like all penalty-type methods in the nonconvex setting, \cref{alg:RIPM} may generate limit points that are infeasible for \eqref{eq:P}.
Patterning standard arguments, the following result gives sufficient conditions for the feasibility of limit points; cf. \cite[Ex. 4.12]{birgin2014practical}.
\begin{prop}
	Consider a sequence $\{ x^k,y^k,z^k \}$ of iterates generated by \cref{alg:RIPM}.
	Then, each limit point $x^\ast$ of $\{x^k\}$ is feasible	for \eqref{eq:P} if one of the following conditions holds:
	\begin{enumerate}[label=(\roman*)]
		\item the sequence $\{\rho_k\}$ is bounded away from zero, or
		\item there exists some $B \in \R$ such that for all $k\in\N$
			\begin{equation*}
				f(x^k) + \frac{1}{2\rho_k} \norm{c(x^k) + \rho_k \hat{y}^k}^2 \leq B .
			\end{equation*}
	\end{enumerate}
\end{prop}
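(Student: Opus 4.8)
The plan is to reduce the statement to showing $c(x^\ast)=0$. Since \cref{lem:xyzSignKKT} gives $x^k>0$ for all $k$, every limit point already satisfies $x^\ast\geq 0$; and I may assume the algorithm does not terminate in finite time, because with $\varepsilon=0$ (as in \cref{lem:asympComplementarity}) finite termination returns an exact KKT point, which is feasible. A preliminary observation worth recording is that $\{\rho_k\}$ is nonincreasing — at \cref{step:updatePenalty:failed} it is either kept or rescaled by a factor in $(0,\kappa_\rho]$ with $\kappa_\rho<1$ — hence convergent to some $\rho_\infty\geq 0$, so that either $\rho_\infty>0$ or $\rho_k\to 0$.

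Under condition~(i), I would argue that $\rho_\infty>0$ forces only finitely many reductions of the penalty (infinitely many would send $\rho_k\to 0$), so there is $k_0\geq 1$ such that the test at \cref{step:updatePenalty:if} passes for all $k\geq k_0$, i.e.\ $C^k\leq\max\{\varepsilon,\theta_\rho C^{k-1}\}$. With $\varepsilon=0$ and $\theta_\rho\in[0,1)$ this yields the geometric bound $C^k\leq\theta_\rho^{\,k-k_0}C^{k_0}\to 0$, so $\norm{c(x^k)}\to 0$; continuity of $c$ along a subsequence $x^k\to_K x^\ast$ then gives $c(x^\ast)=0$.

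Under condition~(ii), if $\{\rho_k\}$ is bounded away from zero I am back in case~(i); otherwise $\rho_k\to 0$. Then I would expand the square in the hypothesis to
\begin{equation*}
	f(x^k)+\frac{1}{2\rho_k}\norm{c(x^k)}^2+\innprod{c(x^k)}{\hat{y}^k}+\frac{\rho_k}{2}\norm{\hat{y}^k}^2\leq B ,
\end{equation*}
fix a subsequence $x^k\to_K x^\ast$, and note that along it $f(x^k)\to f(x^\ast)$ and $c(x^k)\to c(x^\ast)$ by continuity, $\hat{y}^k$ stays in the bounded set $Y$, and $\rho_k\to 0$, so that every term on the left except $\tfrac{1}{2\rho_k}\norm{c(x^k)}^2$ remains bounded. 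If $c(x^\ast)\neq 0$ then $\norm{c(x^k)}\geq\tfrac12\norm{c(x^\ast)}>0$ for large $k\in K$, whence $\tfrac{1}{2\rho_k}\norm{c(x^k)}^2\to+\infty$, contradicting the bound; therefore $c(x^\ast)=0$.

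The expansion and the continuity/boundedness estimates are routine. The step I expect to be the main obstacle is the bookkeeping in case~(i): translating ``$\{\rho_k\}$ bounded away from zero'' into ``the feasibility test at \cref{step:updatePenalty:if} is eventually always satisfied'', and hence into geometric decay of $C^k$ — this, together with the standing choice $\varepsilon=0$, is what upgrades the conclusion from approximate to exact feasibility. In case~(ii) the only delicate point is to first route the non-vanishing-$\rho_k$ subcase through~(i).
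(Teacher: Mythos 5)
The paper does not actually prove this proposition: it only remarks that the result follows by "patterning standard arguments" and points to \cite[Ex.~4.12]{birgin2014practical}. Your proposal supplies exactly that standard argument, and it is correct: in case~(i) the monotone, geometrically-damped update of $\rho_k$ at \cref{step:updatePenalty:failed} means boundedness away from zero forces the test at \cref{step:updatePenalty:if} to hold for all large $k$, whence $C^k\to 0$; in case~(ii) the expansion of the shifted penalty term plus boundedness of $Y$, continuity of $f$ and $c$, and $\rho_k\to 0$ yield the contradiction if $c(x^\ast)\neq 0$. Routing the non-vanishing-$\rho_k$ subcase of (ii) through (i) is the right move, and $x^\ast\geq 0$ from \cref{lem:xyzSignKKT} is correctly supplied. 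Your one substantive editorial observation is also well taken: the statement as printed omits the hypothesis $\varepsilon=0$ that the neighbouring results carry, and case~(i) genuinely needs it (with $\varepsilon>0$ the test $C^k\leq\max\{\varepsilon,\theta_\rho C^{k-1}\}$ only forces $\limsup_k C^k\leq\varepsilon$, i.e.\ $\varepsilon$-feasibility of limit points), so making that assumption explicit, as you do, is the correct reading rather than a gap in your argument.
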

These conditions are generally difficult to check a priori.
Nevertheless, in the situation where each iterate $x^k$ is actually a (possibly inexact) global minimizer of \eqref{eq:PproxyBarrier}, then limit points generated by \cref{alg:RIPM} have minimum constraint violation and tend to minimize the objective function subject to minimal infeasibility \cite[Thm 5.1, Thm 5.3]{birgin2014practical}.
In particular, limit points are indeed feasible if \eqref{eq:P} admits feasible points.
However, these properties cannot be expected by solving the subproblems only up to stationarity.
Nonetheless, even in the case where a limit point is not necessarily feasible, the next result shows that it is at least a stationary point for a feasibility problem associated to \eqref{eq:P}.
\begin{prop}
	Consider a sequence $\{x^k,y^k,z^k\}$ generated by \cref{alg:RIPM} with $\varepsilon = 0$.
	Then each limit point $x^\ast$ of $\{x^k\}$ is KKT stationary for the problem
	\begin{equation}\label{eq:feasibilityP}
		\minimize_{x\geq 0} \quad \frac{1}{2} \|c(x)\|^2 .
	\end{equation}
\end{prop}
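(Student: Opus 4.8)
The plan is to exhibit, for an arbitrary limit point $x^\ast$, a multiplier $w^\ast\in\XX$ certifying KKT stationarity for \eqref{eq:feasibilityP}, that is,
\[
\nabla c(x^\ast)^\top c(x^\ast) + w^\ast = 0,\qquad x^\ast\geq 0,\qquad w^\ast\leq 0,\qquad \min\{x^\ast,-w^\ast\}=0 .
\]
First I would collect the standing facts. With $\varepsilon=0$ the inner tolerance obeys $\epsilon_k = \kappa_\epsilon^k\,\epsilon_0 \searrow 0$; the penalty sequence $\{\rho_k\}$ is nonincreasing and positive, hence $\rho_k\to\rho_\infty\geq 0$; \cref{lem:xyzSignKKT} gives $x^k>0$, $z^k\leq 0$ and the residual bounds \eqref{eq:xyzSignKKT}; and \cref{lem:asympComplementarity} yields $\min\{x^k,-z^k\}\to 0$. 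As in the proof of \cref{lem:asympComplementarity}, finite termination can only occur at a feasible KKT point of \eqref{eq:P}, for which the claim is immediate with $w^\ast=0$; so I would assume the algorithm runs indefinitely and fix a subsequence $x^k\to_K x^\ast$.

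If $\{\rho_k\}$ is bounded away from zero, case (i) of the preceding proposition applies and $x^\ast$ is feasible for \eqref{eq:P}; then $c(x^\ast)=0$ gives $\nabla c(x^\ast)^\top c(x^\ast)=0$, and $w^\ast=0$ works because $x^\ast\geq 0$, being a limit of positive iterates, forces $\min\{x^\ast,0\}=0$ componentwise.

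The substantive case is $\rho_\infty=0$, i.e., $\rho_k\searrow 0$. The idea is to isolate $\nabla c(x^\ast)^\top c(x^\ast)$ from the barrier-subproblem stationarity. Writing \eqref{eq:xyzSignKKT} as $\nabla f(x^k)+\nabla c(x^k)^\top y^k+z^k=r^k$ and $c(x^k)+\rho_k(\hat{y}^k-y^k)=\delta^k$ with $\norm{r^k}\leq\epsilon_k$ and $\norm{\delta^k}\leq 2\epsilon_k$, I would eliminate $y^k$ by substituting $\rho_k y^k = c(x^k)+\rho_k\hat{y}^k-\delta^k$ into $\rho_k$ times the first identity, which yields
\[
\nabla c(x^k)^\top c(x^k) + \rho_k z^k = \rho_k r^k - \rho_k\nabla f(x^k) - \rho_k\nabla c(x^k)^\top\hat{y}^k + \nabla c(x^k)^\top\delta^k .
\]
Along $K$, continuity of $\nabla f$ and $\nabla c$, boundedness of $Y$, and $\rho_k,\epsilon_k\to 0$ drive the right-hand side to $0$, while $\nabla c(x^k)^\top c(x^k)\to_K\nabla c(x^\ast)^\top c(x^\ast)$; hence $\rho_k z^k\to_K w^\ast:=-\nabla c(x^\ast)^\top c(x^\ast)$, which satisfies the stationarity equation by construction. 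From $\rho_k>0$ and $z^k\leq 0$ we get $w^\ast\leq 0$. For complementarity I would argue coordinatewise: if $x^\ast_i=0$ then $-w^\ast_i\geq 0$ gives $\min\{x^\ast_i,-w^\ast_i\}=0$; if $x^\ast_i>0$ then $\min\{x^k_i,-z^k_i\}\to 0$ together with $x^k_i\to x^\ast_i>0$ forces $-z^k_i\to 0$ (eventually $-z^k_i<x^k_i$), and the boundedness of $\{\rho_k\}$ yields $w^\ast_i=\lim_K\rho_k z^k_i=0$, so again $\min\{x^\ast_i,-w^\ast_i\}=0$. Collecting these facts over the coordinates finishes the argument.

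The main obstacle I anticipate is identifying the right candidate multiplier: one must scale the gradient residual \eqref{eq:xyzSignKKT:x} by $\rho_k$ and cancel $y^k$ through the feasibility residual \eqref{eq:xyzSignKKT:y} so that precisely the term $\nabla c(x^\ast)^\top c(x^\ast)$ survives in the limit, then handle the limiting arguments with care (using that $\{\epsilon_k\}$ and $\{\rho_k\}$ are null and $Y$ bounded) and verify the complementarity condition coordinate by coordinate. Flagging that a penalty parameter bounded away from zero must be treated separately, via the feasibility proposition rather than the same multiplier, is also part of the plan.
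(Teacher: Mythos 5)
Your proposal is correct and follows essentially the same route as the paper: the same case split on whether $\{\rho_k\}$ stays bounded away from zero, the same elimination of $y^k$ by combining $\rho_k$ times the gradient residual with the feasibility residual, and the same identification of the multiplier as the limit of $\rho_k z^k$. Your coordinatewise verification of $\min\{x^\ast,-w^\ast\}=0$ is in fact slightly more explicit than the paper's appeal to \cref{lem:asympComplementarity}, but the argument is the same in substance.
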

\begin{proof}
	We may consider two cases, depending on the sequence $\{\rho_k\}$.
	If $\{\rho_k\}$ remains bounded away from zero, then \cref{step:updatePenalty:if,step:updatePenalty:failed} of \cref{alg:RIPM} imply that $\|c(x^k)\|\to0$ for $k\to\infty$.
	Continuity of $c$ and properties of norms yield $c(x^\ast) = 0$.
	Furthermore, by construction, we have $x^k > 0$ for all $k\in\N$, hence $x^\ast \geq 0$.
	Altogether, this shows that $x^\ast$ is feasible for \eqref{eq:P}, namely a global minimizer for the feasibility problem \eqref{eq:feasibilityP} and, therefore, a KKT stationary point thereof.
	Assume now that $\rho_k \to 0$.
	Define $\delta^k\in\XX$ and $\eta^k \in \YY$ as
	\begin{align*}
		\delta^k {}\coloneqq{}& \nabla f(x^k) + \nabla c(x^k)^\top y^k + z^k \\
		\eta^k {}\coloneqq{}& c(x^k) + \rho_k (\hat{y}^k - y^k)
	\end{align*}
	for all $k\in\N$.
	In view of \cref{lem:xyzSignKKT}, we have that $\norm{\delta^k} \leq \epsilon_k$ and $\norm{\eta^k}\leq2\epsilon_k$ hold for all $k\in\N$.
	Multiplying $\delta^k$ by $\rho_k$, substituting $y^k$ and rearranging, we obtain
	\begin{equation*}
		\rho_k \delta^k
		{}={}
		\rho_k \nabla f(x^k) + \nabla c(x^k)^\top \left[ \rho_k \hat{y}^k + c(x^k) - \eta^k \right] + \rho_k z^k .
	\end{equation*}
	Now, let $x^\ast$ be a limit point of $\{x^k\}$ and $\{x^k\}_K$ a subsequence such that $x^k \to_K x^\ast$.
	Then the sequence $\{\nabla f(x^k)\}_K$ is bounded, and so is $\{\hat{y}^k\}_K \subset Y$ by construction.
	Recalling from \cref{lem:xyzSignKKT} that $x^k > 0$ and $z^k \leq 0$, and observing that $0 \leq \norm{\delta^k}, \norm{\eta^k} \leq 2 \epsilon_k \to 0$, we shall now take the limit of $\rho_k \delta^k$ for $k\to_K\infty$, resulting in
	\begin{equation*}
		0
		{}={}
		\nabla c(x^\ast)^\top c(x^\ast) + \tilde{z}^\ast
	\end{equation*}
	for some $\tilde{z}^\ast \leq 0$.
	As a limit point of $\{ \rho_k z^k\}$, $\tilde{z}^\ast$ together with $x^\ast$ satisfy $\min\{x^\ast,-\tilde{z}^\ast\}=0$ by \cref{lem:asympComplementarity}.
	Since we also have $x^\ast \geq 0$, it follows that $x^\ast$ is KKT stationary for \eqref{eq:feasibilityP} according to \cref{defn:epsKKTstationary}.
	\qedhere
\end{proof}
Finally, we qualify the output of \cref{alg:RIPM} in the case of feasible limit points.
In particular, it is shown that any feasible limit point is AKKT stationary for \eqref{eq:P} in the sense of \cref{defn:asympKKTstationary}.
Under some additional boundedness conditions, feasible limit points are KKT stationary, according to \cref{defn:epsKKTstationary}.
\begin{thm}\label{thm:subseqKKTstationarity}
	Let $\{x^k,y^k,z^k\}$ be a sequence of iterates generated by \cref{alg:RIPM} with $\varepsilon = 0$.
	Let $x^\ast$ be a feasible limit point of $\{x^k\}$ and $\{x^k\}_K$ a subsequence such that $x^k \to_K x^\ast$.
	Then,
	\begin{enumerate}[label=(\roman{*})]
		\item\label{thm:subseqKKTstationarity:asymptotic} $x^\ast$ is an AKKT stationary point for \eqref{eq:P}.
		\item\label{thm:subseqKKTstationarity:bounded} If $\{y^k,z^k\}_K$ remain bounded, then $x^\ast$ is KKT stationary for \eqref{eq:P}.
	\end{enumerate}
\end{thm}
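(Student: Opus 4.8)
The plan is to exploit \cref{lem:xyzSignKKT}, which already delivers the approximate stationarity residual $\norm{\nabla f(x^k) + \nabla c(x^k)^\top y^k + z^k} \le \epsilon_k$ and the relation $\norm{c(x^k) + \rho_k(\hat y^k - y^k)} \le 2\epsilon_k$, together with \cref{lem:asympComplementarity}, which gives $\min\{x^k,-z^k\} \to 0$. Since $\varepsilon = 0$, either the algorithm terminates finitely with an exact KKT point (and then both claims are trivial), or it runs forever and $\epsilon_k = \kappa_\epsilon^k \epsilon_0 \searrow 0$. I would dispose of the finite case in one sentence and work with the infinite case thereafter.

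For part \ref{thm:subseqKKTstationarity:asymptotic}, I would simply verify \cref{defn:asympKKTstationary} directly using the full index set restricted to $K$ as the approximating sequence. Feasibility of $x^\ast$ is assumed; $x^k \to_K x^\ast$ holds by hypothesis. Condition \eqref{eq:asympKKTstationary:x} is \eqref{eq:xyzSignKKT:x} with $\epsilon_k \to 0$, and condition \eqref{eq:asympKKTstationary:z} is exactly the conclusion of \cref{lem:asympComplementarity} restricted to the subsequence. So \ref{thm:subseqKKTstationarity:asymptotic} is almost immediate once the finite-termination case is handled — the only mild subtlety is noting that \cref{defn:asympKKTstationary} does not require the multiplier sequences to converge, only the combinations in \eqref{eq:asympKKTstationary}, so no boundedness is needed here.

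For part \ref{thm:subseqKKTstationarity:bounded}, the added hypothesis is that $\{y^k,z^k\}_K$ is bounded. I would pass to a further subsequence $K' \subset K$ along which $y^k \to y^\ast$ and $z^k \to z^\ast$. Taking limits in \eqref{eq:xyzSignKKT:x} and using continuity of $\nabla f$ and $\nabla c$ gives $\nabla f(x^\ast) + \nabla c(x^\ast)^\top y^\ast + z^\ast = 0$, i.e., \eqref{eq:epsKKTstationary:x} with $\varepsilon = 0$. Feasibility of $x^\ast$ gives $c(x^\ast) = 0$ and $x^\ast \ge 0$, so the first two rows of \eqref{eq:epsKKTstationary} hold. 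From $z^k \le 0$ (by \cref{lem:xyzSignKKT}) we get $z^\ast \le 0 \le \varepsilon$, and passing to the limit in $\min\{x^k,-z^k\} \to 0$ — continuity of $\min\{\cdot,\cdot\}$ — yields $\min\{x^\ast,-z^\ast\} = 0$. Hence all of \eqref{eq:epsKKTstationary} holds with $\varepsilon = 0$, and $x^\ast$ is KKT stationary.

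The main (though modest) obstacle is bookkeeping the nested subsequences correctly and making sure the $\rho_k(\hat y^k - y^k)$ term is not needed in the limit: in \eqref{eq:xyzSignKKT:y} the penalty term need not vanish if $\rho_k \not\to 0$, but for part \ref{thm:subseqKKTstationarity:bounded} we only use feasibility $c(x^\ast) = 0$ directly rather than extracting it from \eqref{eq:xyzSignKKT:y}, so this is harmless; I would just be careful to state that we rely on the assumed feasibility of $x^\ast$ here. No deeper difficulty is anticipated; the theorem is essentially a clean packaging of the two preceding lemmas plus a routine limiting argument.
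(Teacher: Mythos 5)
Your proposal is correct and follows essentially the same route as the paper: part (i) is verified directly from \cref{lem:xyzSignKKT} (with $\epsilon_k\to 0$) and \cref{lem:asympComplementarity}, and part (ii) extracts convergent sub-subsequences of the bounded multipliers and passes to the limit using continuity of $\nabla f$ and $\nabla c$. Your additional remarks (the finite-termination case, the observation that the $\rho_k(\hat y^k-y^k)$ term in \eqref{eq:xyzSignKKT:y} is not needed because feasibility of $x^\ast$ is assumed) only make explicit what the paper leaves implicit.
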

\begin{proof}
	\ref{thm:subseqKKTstationarity:asymptotic}
		Together with the fact that $\epsilon_k \to 0$, \cref{lem:xyzSignKKT} ensures that the sequence $\{x^k\}_K$ satisfies condition \eqref{eq:asympKKTstationary:x}, whereas \cref{lem:asympComplementarity} implies \eqref{eq:asympKKTstationary:z}.
		Feasibility of $x^\ast$ completes the proof.
	
	\ref{thm:subseqKKTstationarity:bounded}
		By boundedness, the subsequences $\{y^k\}_K$ and $\{z^k\}_K$ admit some limit points $y^\ast$ and $z^\ast$, respectively.
		Thus, from the previous assertion and with continuity arguments on $f$ and $c$, it follows that $x^\ast$ is KKT stationary for \eqref{eq:P}, not only asymptotically.
	\qedhere
\end{proof}
Provided that the iterates admit a feasible limit point, finite termination of \cref{alg:RIPM} with an $\varepsilon$-KKT stationary point can be established as a direct consequence of \cref{thm:subseqKKTstationarity}.

\section{Numerical Results}\label{sec:numerics}
In this section we test an instance of the proposed regularized interior point approach, denoted \regip{}, on the CUTEst benchmark problems \cite{gould2015cutest}.
\regip{} is compared in terms of robustness against the IP solver \ipopt{} \cite{waechter2006implementation} and the AL solver \percival{} \cite{santos2020percivaljl}, which is based on a BCL method \cite{conn1991globally} coupled with a trust-region matrix-free solver \cite{lin1999newton} for the subproblems.
We do not report runtimes nor iteration counts since a fair comparison would require close inspection of heuristics and fallbacks \cite[\S 3]{waechter2006implementation}.

We implemented \regip{} in Julia and set up the numerical experiments adopting the JSO software infrastructure by \cite{orban2019jso}.
The IP solver \ipopt{} acts as subsolver to execute \cref{step:xy}, warm-started at the current primal $(x^{k-1}, y^{k-1})$ and dual $(y^{k-1}$, $z^{k-1})$ estimates.
We use its parameter \ipoptOption{tol} to set the (inner) tolerance $\epsilon_k$, disabling other termination conditions, and let \ipopt{} control the barrier parameter as needed to approximately solve the regularized subproblem.%
\footnote{Solving a sequence of barrier subproblems may hinder the computational efficiency of \regip{} compared to the approach behind \cref{alg:RIPM}, but does not degrade its reliability.
Ongoing research focuses on solving \eqref{eq:PproxyBarrier} and letting the IP subsolver update the barrier parameter after warm-starting at the current primal-dual estimate, in the spirit of \cite[Alg. 3]{cipolla2022proximal}.\label{foot:barriersubsolver}}
We let the safeguarding set be $Y \coloneqq \{ v\in\YY \,|\, \|v\|_\infty \leq 10^{20} \}$ and choose $\hat{y}^k$ by projecting the current estimate $y^{k-1}$ onto $Y$.
We set the initial penalty parameter to $\rho_0 = 10^{-6}$, the inner tolerance $\epsilon_0 = \sqrt[3]{\varepsilon}$, and parameters $\theta_\rho = 0.5$, $\kappa_\rho = 0.5$, and $\kappa_\epsilon = 0.5$.
\regip{} declares success, and returns a $\varepsilon$-KKT stationary point, as soon as $\epsilon_k \leq \varepsilon$ and $C^k \leq \varepsilon$.%
\footnote{The condition $\epsilon_k \leq \varepsilon$ implies both $\varepsilon$-stationarity \eqref{eq:epsKKTstationary:x} and $\varepsilon$-complementarity \eqref{eq:epsKKTstationary:z} required for $\varepsilon$-KKT stationarity. This follows from the observation that, in \regip{}, the subsolver approximately solves \eqref{eq:Pproxy} at \cref{step:xy}, not \eqref{eq:PproxyBarrier}; see Footnote~\ref{foot:barriersubsolver}.}
Instead, if $\epsilon_k \leq \varepsilon$, $C^k > \varepsilon$ and $\rho_k \leq \rho_{\min} \coloneqq 10^{-20}$, \regip{} stops declaring (local) infeasibility.
For \ipopt{}, we set the tolerance \ipoptOption{tol} to $\varepsilon$, remove the other desired thresholds, and disable termination based on acceptable iterates.
For \percival{}, we set absolute and relative tolerances \ipoptOption{atol}, \ipoptOption{rtol}, and \ipoptOption{ctol} to $\varepsilon$.
We select all CUTEst problems with at most 1000 variables and constraints, obtaining a test set with 609 problems.
All solvers are provided with the primal-dual initial point available in CUTEst, a time limit of $60$ seconds, the maximum number of iterations set to $10^9$, and a tolerance $\varepsilon \in \{10^{-3}, 10^{-5} \}$.
A solver is deemed to solve a problem if it returns with a successful status; it fails otherwise.
The source codes for the numerical experiments have been archived on Zenodo at
\begin{center}
	\href{https://doi.org/\ZenodoCodeDoi}{\textsc{doi}: \ZenodoCodeDoi}.
\end{center}

\Cref{tab:cutest} summarizes the results, stratified by solver, termination tolerance ($\varepsilon$) and problem size ($n$, $m$).
For each combination, we indicate the number of times \regip{} solves a problem that the other solver fails (``W'') or solves (``T+'') and the number of times \regip{} fails on a problem that the other one fails (``T-'') or solves (``L'').
The results show that \regip{} succeeds on more problems than the other solvers, consistently for both low and high accuracy, indicating that the underlying regularized IP approach can form the basis for reliable and scalable solvers.%
\footnote{We agree with \cite[\S 5]{birgin2016sequential} on the fact that ``strong statements concerning the relative efficiency or robustness [$\ldots$] are not possible in nonlinear optimization.''}

\begin{table}
	\begin{center}
		\renewcommand{\arraystretch}{1.2}
		\caption{Comparison on CUTEst problems with $n$ variables and $m$ constraints}%
		\label{tab:cutest}%
		\begin{tabular}{cc|ccc|ccc}
			\hline
			\multicolumn{8}{c}{\regip{} against \ipopt{}} \\ \hline
			\multicolumn{2}{c|}{Size $n, m$} & \multicolumn{3}{c|}{Tolerance $\varepsilon = 10^{-3}$} & \multicolumn{3}{|c}{Tolerance $\varepsilon = 10^{-5}$} \\
			Min & Max & W & T & L & W & T & L \\ \hline
			0 & 10 & 19 & 417+/25- & 3 & 15 & 417+/29- & 3 \\
			11 & 100 & 6 & 60+/7- & 1 & 7 & 58+/8- & 1 \\
			101 & 1000 & 6 & 57+/8- & 0 & 7 & 53+/10- & 1 \\ \hline
			\multicolumn{8}{c}{\regip{} against \percival{}} \\ \hline
			\multicolumn{2}{c|}{Size $n, m$} & \multicolumn{3}{c|}{Tolerance $\varepsilon = 10^{-3}$} & \multicolumn{3}{|c}{Tolerance $\varepsilon = 10^{-5}$} \\
			Min & Max & W & T & L & W & T & L \\ \hline
			0 & 10 & 17 & 419+/20- & 8 & 19 & 413+/24- & 8 \\
			11 & 100 & 12 & 54+/8- & 0 & 18 & 47+/8- & 1 \\
			101 & 1000 & 37 & 26+/8- & 0 & 37 & 23+/11- & 0 \\ \hline
		\end{tabular}
	\end{center}
\end{table}

\section{Conclusion}\label{sec:conclusion}
This paper has presented a regularized interior point approach to solving constrained nonlinear optimization problems.
Operating as an outer regularization layer, a quadratic proximal penalty provides robustness whilst consuming minimal computation effort once embedded into existent interior point codes as a principled inertia correction strategy.
Furthermore, regularizing the equality constraints allows to safely adopt more efficient linear algebra routines, while waiving the need for an infeasibility detection mechanism within the subsolver.
Preliminary numerical results indicate that a close integration of proximal regularization within interior point schemes is key to provide efficient and robust solvers.
We encourage further research in this direction.

\subsection*{Acknowledgements}
\TheAcknowledgements

\phantomsection
\addcontentsline{toc}{section}{References}
\bibliographystyle{jnsao}
{\small\bibliography{biblio}}

\begin{thebibliography}{10}

\bibitem{altman1999regularized}
A{.\nobreak\kern 0.33333em}Altman and J{.\nobreak\kern 0.33333em}Gondzio,
  Regularized symmetric indefinite systems in interior point methods for linear
  and quadratic optimization, \emph{Optimization Methods and Software} 11
  (1999),  275--302,
  \href{https://dx.doi.org/10.1080/10556789908805754}{\nolinkurl{doi:10.1080/10556789908805754}}.

\bibitem{andreani2008augmented}
R{.\nobreak\kern 0.33333em}Andreani, E.\,G{.\nobreak\kern 0.33333em}Birgin,
  J.\,M{.\nobreak\kern 0.33333em}Mart\'inez, and M.\,L{.\nobreak\kern
  0.33333em}Schuverdt, On Augmented {L}agrangian Methods with General
  Lower--Level Constraints, \emph{SIAM Journal on Optimization} 18 (2008),
  1286--1309,
  \href{https://dx.doi.org/10.1137/060654797}{\nolinkurl{doi:10.1137/060654797}}.

\bibitem{armand2019rapid}
P{.\nobreak\kern 0.33333em}Armand and N.\,N{.\nobreak\kern 0.33333em}Tran,
  Rapid infeasibility detection in a mixed logarithmic barrier-augmented
  {L}agrangian method for nonlinear optimization, \emph{Optimization Methods
  and Software} 34 (2019),  991--1013,
  \href{https://dx.doi.org/10.1080/10556788.2018.1528250}{\nolinkurl{doi:10.1080/10556788.2018.1528250}}.

\bibitem{bertolazzi2007real}
E{.\nobreak\kern 0.33333em}Bertolazzi, F{.\nobreak\kern 0.33333em}Biral, and
  M{.\nobreak\kern 0.33333em}Da~Lio, Real-Time Motion Planning for Multibody
  Systems, \emph{Multibody System Dynamics} 17 (2007),  119--139,
  \href{https://dx.doi.org/10.1007/s11044-007-9037-7}{\nolinkurl{doi:10.1007/s11044-007-9037-7}}.

\bibitem{birgin2016sequential}
E.\,G{.\nobreak\kern 0.33333em}Birgin, L.\,F{.\nobreak\kern 0.33333em}Bueno,
  and J.\,M{.\nobreak\kern 0.33333em}Mart\'inez, Sequential
  equality-constrained optimization for nonlinear programming,
  \emph{Computational Optimization and Applications} 65 (2016),  699--721,
  \href{https://dx.doi.org/10.1007/s10589-016-9849-6}{\nolinkurl{doi:10.1007/s10589-016-9849-6}}.

\bibitem{birgin2014practical}
E.\,G{.\nobreak\kern 0.33333em}Birgin and J.\,M{.\nobreak\kern
  0.33333em}Mart{\'i}nez, \emph{Practical Augmented {L}agrangian Methods for
  Constrained Optimization}, Society for Industrial and Applied Mathematics,
  Philadelphia, PA, 2014.

\bibitem{cipolla2022proximal}
S{.\nobreak\kern 0.33333em}Cipolla and J{.\nobreak\kern 0.33333em}Gondzio,
  Proximal stabilized Interior Point Methods for quadratic programming and
  low-frequency-updates preconditioning techniques, 2022,
  \href{https://dx.doi.org/10.48550/arxiv.2205.01775}{\nolinkurl{doi:10.48550/arxiv.2205.01775}}.

\bibitem{conn1991globally}
A.\,R{.\nobreak\kern 0.33333em}Conn, N.\,I.\,M{.\nobreak\kern 0.33333em}Gould,
  and P.\,L{.\nobreak\kern 0.33333em}Toint, A Globally Convergent Augmented
  {L}agrangian Algorithm for Optimization with General Constraints and Simple
  Bounds, \emph{SIAM Journal on Numerical Analysis} 28 (1991),  545--572,
  \href{https://dx.doi.org/10.1137/0728030}{\nolinkurl{doi:10.1137/0728030}}.

\bibitem{curtis2012penalty}
F.\,E{.\nobreak\kern 0.33333em}Curtis, A penalty-interior-point algorithm for
  nonlinear constrained optimization, \emph{Mathematical Programming
  Computation} 4 (2012),  181--209,
  \href{https://dx.doi.org/10.1007/s12532-012-0041-4}{\nolinkurl{doi:10.1007/s12532-012-0041-4}}.

\bibitem{demarchi2021augmented}
A{.\nobreak\kern 0.33333em}De~Marchi, Augmented {L}agrangian methods as
  dynamical systems for constrained optimization, in \emph{60th IEEE Conference
  on Decision and Control (CDC)}, IEEE, Austin, TX, 2021,  6533--6538,
  \href{https://dx.doi.org/10.1109/CDC45484.2021.9683199}{\nolinkurl{doi:10.1109/cdc45484.2021.9683199}}.

\bibitem{demarchi2022qpdo}
A{.\nobreak\kern 0.33333em}De~Marchi, On a Primal-Dual {N}ewton Proximal Method
  for Convex Quadratic Programs, \emph{Computational Optimization and
  Applications}  (2022),
  \href{https://dx.doi.org/10.1007/s10589-021-00342-y}{\nolinkurl{doi:10.1007/s10589-021-00342-y}}.

\bibitem{demarchi2022interior}
A{.\nobreak\kern 0.33333em}De~Marchi and A{.\nobreak\kern 0.33333em}Themelis,
  An Interior Proximal Gradient Method for Nonconvex Optimization, 2022,
  \href{https://dx.doi.org/10.48550/arxiv.2208.00799}{\nolinkurl{doi:10.48550/arxiv.2208.00799}}.

\bibitem{dehghani2020regularized}
M{.\nobreak\kern 0.33333em}Dehghani, A{.\nobreak\kern 0.33333em}Lambe, and
  D{.\nobreak\kern 0.33333em}Orban, A regularized interior-point method for
  constrained linear least squares, \emph{INFOR: Information Systems and
  Operational Research} 58 (2020),  202--224,
  \href{https://dx.doi.org/10.1080/03155986.2018.1559428}{\nolinkurl{doi:10.1080/03155986.2018.1559428}}.

\bibitem{diehl2009efficient}
M{.\nobreak\kern 0.33333em}Diehl, H.\,J{.\nobreak\kern 0.33333em}Ferreau, and
  N{.\nobreak\kern 0.33333em}Haverbeke, Efficient numerical methods for
  nonlinear {MPC} and moving horizon estimation, in \emph{Nonlinear Model
  Predictive Control: Towards New Challenging Applications}, Springer, 2009,
  391--417,
  \href{https://dx.doi.org/10.1007/978-3-642-01094-1_32}{\nolinkurl{doi:10.1007/978-3-642-01094-1_32}}.

\bibitem{santos2020percivaljl}
E.\,A{.\nobreak\kern 0.33333em}dos Santos and A.\,S{.\nobreak\kern
  0.33333em}Siqueira, {Percival.jl}: an augmented {L}agrangian method, 2020,
  \href{https://dx.doi.org/10.5281/zenodo.3969045}{\nolinkurl{doi:10.5281/zenodo.3969045}},
  \url{https://github.com/JuliaSmoothOptimizers/Percival.jl}.

\bibitem{fiacco1968nonlinear}
A.\,V{.\nobreak\kern 0.33333em}Fiacco and G.\,P{.\nobreak\kern
  0.33333em}McCormick, \emph{Nonlinear Programming: Sequential Unconstrained
  Minimization Techniques}, Wiley, New York, 1968.

\bibitem{friedlander2012primal}
M.\,P{.\nobreak\kern 0.33333em}Friedlander and D{.\nobreak\kern
  0.33333em}Orban, A primal-dual regularized interior-point method for convex
  quadratic programs, \emph{Mathematical Programming Computation} 4 (2012),
  71--107,
  \href{https://dx.doi.org/10.1007/s12532-012-0035-2}{\nolinkurl{doi:10.1007/s12532-012-0035-2}}.

\bibitem{gondzio2012interior}
J{.\nobreak\kern 0.33333em}Gondzio, Interior point methods 25 years later,
  \emph{European Journal of Operational Research} 218 (2012),  587--601,
  \href{https://dx.doi.org/10.1016/j.ejor.2011.09.017}{\nolinkurl{doi:10.1016/j.ejor.2011.09.017}}.

\bibitem{gould2015cutest}
N.\,I.\,M{.\nobreak\kern 0.33333em}Gould, D{.\nobreak\kern 0.33333em}Orban, and
  P.\,L{.\nobreak\kern 0.33333em}Toint, {CUTEst}: a Constrained and
  Unconstrained Testing Environment with safe threads for mathematical
  optimization, \emph{Computational Optimization and Applications} 60 (2015),
  545--557,
  \href{https://dx.doi.org/10.1007/s10589-014-9687-3}{\nolinkurl{doi:10.1007/s10589-014-9687-3}}.

\bibitem{liaomcpherson2020fbstab}
D{.\nobreak\kern 0.33333em}Liao-McPherson and I{.\nobreak\kern
  0.33333em}Kolmanovsky, {FBstab}: {A} proximally stabilized semismooth
  algorithm for convex quadratic programming, \emph{Automatica} 113 (2020),
  108801,
  \href{https://dx.doi.org/10.1016/j.automatica.2019.108801}{\nolinkurl{doi:10.1016/j.automatica.2019.108801}}.

\bibitem{lin1999newton}
C.\,J{.\nobreak\kern 0.33333em}Lin and J.\,J{.\nobreak\kern 0.33333em}Mor\'e,
  {N}ewton's Method for Large Bound-Constrained Optimization Problems,
  \emph{SIAM Journal on Optimization} 9 (1999),  1100--1127,
  \href{https://dx.doi.org/10.1137/S1052623498345075}{\nolinkurl{doi:10.1137/s1052623498345075}}.

\bibitem{ma2018stabilized}
D{.\nobreak\kern 0.33333em}Ma, K.\,L{.\nobreak\kern 0.33333em}Judd,
  D{.\nobreak\kern 0.33333em}Orban, and M.\,A{.\nobreak\kern
  0.33333em}Saunders, Stabilized Optimization Via an {NCL} Algorithm, in
  \emph{Numerical Analysis and Optimization}, M{.\nobreak\kern
  0.33333em}Al-Baali, L{.\nobreak\kern 0.33333em}Grandinetti, and
  A{.\nobreak\kern 0.33333em}Purnama (eds.), Springer, 2018,  173--191,
  \href{https://dx.doi.org/10.1007/978-3-319-90026-1_8}{\nolinkurl{doi:10.1007/978-3-319-90026-1_8}}.

\bibitem{orban2019jso}
D{.\nobreak\kern 0.33333em}Orban and A.\,S{.\nobreak\kern 0.33333em}Siqueira,
  {JuliaSmoothOptimizers}: Infrastructure and Solvers for Continuous
  Optimization in {J}ulia, 2019,
  \href{https://dx.doi.org/10.5281/zenodo.2655082}{\nolinkurl{doi:10.5281/zenodo.2655082}},
  \url{https://juliasmoothoptimizers.github.io}.

\bibitem{potschka2021sequential}
A{.\nobreak\kern 0.33333em}Potschka and H.\,G{.\nobreak\kern 0.33333em}Bock, A
  sequential homotopy method for mathematical programming problems,
  \emph{Mathematical Programming} 187 (2021),  459--486,
  \href{https://dx.doi.org/10.1007/s10107-020-01488-z}{\nolinkurl{doi:10.1007/s10107-020-01488-z}}.

\bibitem{rockafellar1974augmented}
R.\,T{.\nobreak\kern 0.33333em}Rockafellar, Augmented {L}agrange Multiplier
  Functions and Duality in Nonconvex Programming, \emph{SIAM Journal on
  Control} 12 (1974),  268--285,
  \href{https://dx.doi.org/10.1137/0312021}{\nolinkurl{doi:10.1137/0312021}}.

\bibitem{rockafellar1976monotone}
R.\,T{.\nobreak\kern 0.33333em}Rockafellar, Monotone Operators and the Proximal
  Point Algorithm, \emph{SIAM Journal on Control and Optimization} 14 (1976),
  877--898,
  \href{https://dx.doi.org/10.1137/0314056}{\nolinkurl{doi:10.1137/0314056}}.

\bibitem{saraf2022efficient}
N{.\nobreak\kern 0.33333em}Saraf and A{.\nobreak\kern 0.33333em}Bemporad, An
  efficient bounded-variable nonlinear least-squares algorithm for embedded
  {MPC}, \emph{Automatica} 141 (2022),  110293,
  \href{https://dx.doi.org/10.1016/j.automatica.2022.110293}{\nolinkurl{doi:10.1016/j.automatica.2022.110293}}.

\bibitem{siqueira2018regularized}
A.\,S{.\nobreak\kern 0.33333em}Siqueira and D{.\nobreak\kern 0.33333em}Orban, A
  Regularized Interior-Point Method for Constrained Nonlinear Least Squares,
  2018, \url{https://abelsiqueira.github.io/assets/2018-07-23-xiibrazopt.pdf}
  (visited September 21, 2022).
\newblock XII Brazilian Workshop on Continuous Optimization.

\bibitem{sopasakis2020open}
P{.\nobreak\kern 0.33333em}Sopasakis, E{.\nobreak\kern 0.33333em}Fresk, and
  P{.\nobreak\kern 0.33333em}Patrinos, {OpEn}: Code Generation for Embedded
  Nonconvex Optimization, \emph{IFAC-PapersOnLine} 53 (2020),  6548--6554,
  \href{https://dx.doi.org/10.1016/j.ifacol.2020.12.071}{\nolinkurl{doi:10.1016/j.ifacol.2020.12.071}}.
\newblock 21st IFAC World Congress.

\bibitem{vanderbei1995symmetric}
R.\,J{.\nobreak\kern 0.33333em}Vanderbei, Symmetric Quasidefinite Matrices,
  \emph{SIAM Journal on Optimization} 5 (1995),  100--113,
  \href{https://dx.doi.org/10.1137/0805005}{\nolinkurl{doi:10.1137/0805005}}.

\bibitem{vanderbei1999interior}
R.\,J{.\nobreak\kern 0.33333em}Vanderbei and D.\,F{.\nobreak\kern
  0.33333em}Shanno, An Interior-Point Algorithm for Nonconvex Nonlinear
  Programming, \emph{Computational Optimization and Applications} 13 (1999),
  231--252,
  \href{https://dx.doi.org/10.1023/A:1008677427361}{\nolinkurl{doi:10.1023/a:1008677427361}}.

\bibitem{waechter2006implementation}
A{.\nobreak\kern 0.33333em}W{\"a}chter and L.\,T{.\nobreak\kern
  0.33333em}Biegler, On the implementation of an interior-point filter
  line-search algorithm for large-scale nonlinear programming,
  \emph{Mathematical Programming} 106 (2006),  25--57,
  \href{https://dx.doi.org/10.1007/s10107-004-0559-y}{\nolinkurl{doi:10.1007/s10107-004-0559-y}}.

\end{thebibliography}

\end{document}